\newtheorem{theorem}{Theorem}[section]
\newtheorem{lemma}[theorem]{Lemma}
\newtheorem{remark}[theorem]{Remark}
\def\ml{\mathcal{C}}
\def\ml1{\mathcal{C}^1}
\def\mlb1{\mathcal{C}_{b}^{1}}
\def\mc{\mathbb{C}}
\def\mz{\mathbb{Z}}
\def\PP{\mathbb{P}}
  \definecolor{colore}{cmyk}{0,1,0.6,0}
  \definecolor{coloregen}{cmyk}{0.7,0,1,0}
  \definecolor{coloresimo}{cmyk}{1,0.6,0,0}
  \definecolor{colore}{cmyk}{0,0,0,1}
  \definecolor{coloregen}{cmyk}{0,0,0,1}
  \definecolor{coloresimo}{cmyk}{0,0,0,1}
\title{Braid groups in complex Grassmannians}
\author{Sandro {\sc Manfredini}\footnote{Department of Mathematics, University of Pisa. manfredi@dm.unipi.it}  \and Simona {\sc Settepanella}\footnote{Department of Mathematics, Hokkaido University. s.settepanella@math.sci.hokudai.ac.jp}}
\begin{document}

\maketitle

\begin{abstract}
We describe the fundamental group and second homotopy group of ordered $k-$point sets
in $Gr(k,n)$ generating a subspace of fixed dimension. 
\end{abstract}

\begin{center}
{\small\noindent{\bf Keywords}:\\
complex space, configuration spaces, \\braid groups.}
\end{center}

\begin{center}
{\small\noindent{\bf MSC (2010)}:
20F36, 52C35, 57M05, 51A20.}
\end{center}

\section{Introduction}

Let $M$ be a manifold and $\Sigma_h$ be the symmetric group on $h$ elements.\\
 The \emph{ordered} and \emph{unordered configuration spaces} of $h$ distinct points in $M$, $\mathcal{F}_h(M)=\{(x_1,\ldots,x_h)\in
M^h|x_i\neq x_j,\,\,i\neq j\}$ and $\mathcal{C}_h(M)=\mathcal{F}_h(M)/\Sigma_h$, have been widely studied. In recent papers (\cite{BP,MPS,MS}), new configuration spaces were introduced when $M$ is, respectively, the projective space $\mc\PP^n$, the affine space $\mc^n$ and the Grassmannian manifold $Gr(k,n)$ of $k$-dimensional subspaces of $\mc^n$, by stratifying the configuration spaces $\mathcal{F}_h(M)$ (resp. $\mathcal{C}_k(M)$) with complex submanifolds $\mathcal{F}_h^i(M)$ (resp. $ \mathcal{C}_h^i(M)$)
defined as the ordered (resp. unordered) configuration spaces of all $h$ points in $M$ generating a subspace of dimension $i$.
The homotopy groups of those configuration spaces are interesting as they are strongly related to the homotopy groups of the Grassmannian manifolds, i.e. of spheres.\\
In \cite{BP} (resp. \cite{MPS}), the fundamental groups $\pi_1(\mathcal{F}_h^i(\mc\PP^n))$ and $\pi_1(\mathcal{C}_h^i(\mc\PP^n))$ (resp. $\pi_1(\mathcal{F}_h^i(\mc^n))$ and $\pi_1(\mathcal{C}_h^i(\mc^n))$) are computed, proving that the former are trivial and the latter are isomorphic to the symmetric group $\Sigma_h$ except when $i=1$ (resp. $i=1$ and $i=n=h-1$) providing, in this last case, a presentation for both $\pi_1(\mathcal{F}_h^1(\mc\PP^n))$ and $\pi_1(\mathcal{C}_h^1(\mc\PP^n))$ (resp. $\pi_1(\mathcal{F}_h^i(\mc^n))$ and $\pi_1(\mathcal{C}_h^i(\mc^n))$) which is similar to those of the braid groups of the sphere.\\ 
In this paper we generalize the results obtained in \cite{BP} when $M$ is the projective space $\mc\PP^{n-1}=Gr(1,n)$, to the case of Grassmannian manifold $Gr(k,n)$ of $k$-dimensional subspaces of $\mc^n$. We prove that if $\mathcal{F}_h^i(k,n)$ is the $i$-th ordered configuration space of all distinct points $H_1,\ldots,H_h$ in the Grassmannian manifold $Gr(k,n)$ whose sum is a subspace of dimension $i$, then the following result holds.

\begin{theorem}\label{teo:prin} The $i$-th ordered configuration spaces $\mathcal{F}_h^i(k,n)$ are all simply connected if $k>1$. 
\end{theorem}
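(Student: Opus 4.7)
My plan is to argue by induction on $h$, with base case $h=1$ immediate since $\mathcal{F}_1^k(k,n)=Gr(k,n)$ is simply connected. For the inductive step I would first use the globally defined $GL(n)$-equivariant map
\[
q:\mathcal{F}_h^i(k,n)\longrightarrow Gr(i,n),\qquad (H_1,\ldots,H_h)\longmapsto \sum_l H_l,
\]
which is a locally trivial fibration with fiber $\mathcal{F}_h^i(k,i)$ (the configurations in a fixed $\mc^i$) over the simply connected base $Gr(i,n)$. The homotopy long exact sequence then reduces the theorem to showing that $\mathcal{F}_h^i(k,i)$ is simply connected.

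When $i<hk$, I would view $\mathcal{F}_h^i(k,i)$ as the open complement inside the simply connected complex manifold $Gr(k,i)^h$ of the closed analytic subvariety $\Sigma\cup\Delta$, where
$\Sigma=\{(H_l)\mid \sum H_l\neq\mc^i\}$ and $\Delta=\bigcup_{l<m}\{H_l=H_m\}$. A generic point of $\Sigma$ determines the unique hyperplane $\sum H_l\subset\mc^i$, so a parameter count gives $\mathrm{codim}_{\mc}\Sigma=hk-i+1\geq 2$, while each diagonal has codimension $k(i-k)\geq 2$ precisely because $k\geq 2$ and $i>k$. Removing a closed analytic subvariety of complex codimension $\geq 2$ from a simply connected complex manifold preserves $\pi_1$, so $\mathcal{F}_h^i(k,i)$ is simply connected in this range.

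When $i=hk$, the space $\mathcal{F}_h^{hk}(k,hk)$ parametrizes direct sum decompositions $\mc^{hk}=H_1\oplus\cdots\oplus H_h$; here $\Sigma$ is a divisor and the codimension argument fails. I would instead use the Fadell--Neuwirth-type forgetful fibration
\[
p:\mathcal{F}_h^{hk}(k,hk)\longrightarrow\mathcal{F}_{h-1}^{(h-1)k}(k,hk),\qquad (H_1,\ldots,H_h)\longmapsto(H_1,\ldots,H_{h-1}).
\]
The constraint $\dim H_h=k$ forces $\dim\sum_{l<h}H_l=(h-1)k$, so the image is exactly the displayed general-position stratum, and the fiber over such a configuration is the big Schubert cell of $k$-planes complementary to $\bigoplus_{l<h}H_l$ in $\mc^{hk}$, i.e.\ an affine space $\mc^{(h-1)k^2}$. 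Applying $q$ once more to the base reduces its simple connectedness to that of $\mathcal{F}_{h-1}^{(h-1)k}(k,(h-1)k)$, which is the inductive hypothesis.

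The step I expect to be delicate is precisely this codimension-one phenomenon at the edge $i=hk$: one must verify that $p$ is a genuine locally trivial fibration and that the image contains no lower sub-configurations stratum, so a single uniform "ambient simply connected minus codim $\geq 2$" argument is unavailable and the case split is forced. The hypothesis $k>1$ enters essentially only in the codimension estimate for the diagonals $\Delta$, and it is exactly this estimate that fails for $k=1$, reflecting the nontrivial fundamental groups found in the projective case of \cite{BP}.
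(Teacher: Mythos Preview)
Your argument is correct and structurally parallel to the paper's: both use the sum map $q=\gamma$ to reduce the general case to the open stratum $\mathcal{F}_h^i(k,i)$, and both use the forgetful fibration $p=pr$ with contractible fiber to dispose of the borderline case $i=hk$. The genuine difference lies in how the case $i<hk$ is handled. The paper gives no codimension argument there; it simply quotes the prior result from \cite{MS} (recorded as equation~(\ref{casodiretto})) that the open stratum $\mathcal{F}_h^{\min(n,hk)}(k,n)$ is simply connected whenever $n\neq hk$, and then uses $pr$ once to reduce $n=hk$ to an instance with $n\neq hk$. Your codimension count for $\Sigma$ and $\Delta$ inside $Gr(k,i)^h$ is an independent, self-contained proof of exactly that fact in the regime $k>1$, and your observation that $k>1$ enters only through $\mathrm{codim}_{\mathbb C}\Delta=k(i-k)\geq 2$ pinpoints precisely why the projective case $k=1$ behaves differently. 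So your route is more elementary in that it avoids the appeal to \cite{MS}, while the paper's is shorter given that earlier result. One minor redundancy: since you induct on $h$ over all ambient dimensions, the base $\mathcal{F}_{h-1}^{(h-1)k}(k,hk)$ of $p$ is already simply connected by the inductive hypothesis, so the second application of $q$ is not needed.
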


From this, we immediately obtain that the fundamental group of the $i$-th unordered configuration space $\mathcal{F}_h^i(k,n)/\Sigma_h$ is isomorphic to $\Sigma_h$.\\
These results are stated in Section \ref{s:uno}. In Section \ref{s:due} we compute the second homotopy group of the $i$-th configuration spaces in two special cases: the case in which the subspaces are in direct sum and the case of two subspaces.

\section{The first homotopy group of $\mathcal{F}_h^i(k,n)$} \label{s:uno}

Let $Gr(k,n)$ be the grassmannian manifold pa\-ra\-me\-tri\-zing $k$-dimensional subspaces of $\mc^n$, $0<k<n$. 
In \cite{MS} authors define the space $\mathcal{F}_h^i(k,n)$ as the ordered configuration space of all $h$
distinct points $H_1,\ldots,H_h$ in $Gr(k,n)$ such that the dimension of the sum dim$(H_1+\cdots+H_h)$ equals $i$.

\begin{remark}\label{rm:1} The following easy facts hold:
\begin{enumerate}
\item if $h=1$, $\mathcal{F}_h^i(k,n)$ is empty except for $i=k$ and $\mathcal{F}_1^k(k,n)=Gr(k,n)$;
\item if $i=1$, $\mathcal{F}_h^i(k,n)$ is empty except for $k,h=1$ and $\mathcal{F}_1^1(1,n)=Gr(1,n)=\mc\PP^{n-1}$;
\item if $h\geq 2$ and $k=n-1$ then $\mathcal{F}_h^i(k,n)$ is empty except for $i=n$, and, since the sum of two (different) hyperplanes is $\mc^n$, $\mathcal{F}_h^n(n-1,n)=\mathcal{F}_h(Gr(n-1,n))=\mathcal{F}_h(\mc\PP^{n-1})$;
\item if $h\geq 2$ then ${\mathcal{F}_h^i(k,n)}\neq \emptyset$ if and only if $i\geq k+1$ and $i\leq
\min(kh,n)$; 
\item if $h\geq 2$ then $\mathcal{F}_h(Gr(k,n))=\mathop{\coprod}\limits_{i=k+1}^{{\rm min}(hk,n)} \mathcal{F}_h^i(k,n)$,
with the open stratum given by the case of maximum dimension $i=$min$(hk,n)$;
\item if $h\geq 2$ then the adjacency of the strata is given by
$$
\overline{\mathcal{F}_h^i(k,n)}=\mathcal{F}_h^{k+1}(k,n)\coprod\ldots\coprod\mathcal{F}_h^i(k,n).
$$
\end{enumerate}
\end{remark}

As the case $k=1$ has been treated in \cite{BP} and, by the above remarks, the case $h=1$ is trivial, in this paper we will consider $h,k>1$ (and hence $i>k$).\\

In \cite{MS}, authors proved that $\mathcal{F}_h^i(k,n)$ is (when non empty) a complex sub\-ma\-ni\-fold of $Gr(k,n)^h$ of dimension $i(n-i)+hk(i-k)$, and that if  $i=$min$(n,hk)$ and $n\neq hk$ then the open strata $\mathcal{F}_h^i(k,n)$ are simply connected except for $n=2$ (and $k=1$), i.e. 
\begin{equation}\label{casodiretto}
\pi_1(\mathcal{F}_h^{{\rm min}(n,kh)}(k,n))=\left \{ \begin{split}& 0 &{\rm if}\ n\neq hk \\ & \mathcal{PB}_h(S^2) & {\rm if}\ n=2,\ k=1 
\end{split} \right .
\end{equation}
where $\mathcal{PB}_h(S^2)$ is the pure braid group on $h$ strings of the sphere $S^2$.

In order to complete this result and compute fundamental groups in all cases we need two Lemmas.

\begin{lemma}\label{pr:2} Let $V=(H_1,\ldots,H_h)$ be an element in the space $\mathcal{F}_h^i(k,n)$ and denote the sum $H_1+\cdots+H_h\in Gr(i,n)$ by $\gamma(V)$, then the map
\begin{equation}\label{map:gamma}
\gamma: \mathcal{F}_h^i(k,n)  \rightarrow  Gr(i,n)
\end{equation}
is a locally trivial fibration with fiber $\mathcal{F}_h^i(k,i)$.
\end{lemma}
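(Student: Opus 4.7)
The plan is to exhibit explicit local trivializations of $\gamma$ by combining the standard affine atlas of $Gr(i,n)$ with the natural linear action of $GL(n,\mc)$. First I would identify the fibers: fixing $W_0\in Gr(i,n)$, the fiber $\gamma^{-1}(W_0)$ consists of tuples $(H_1,\ldots,H_h)\in Gr(k,n)^h$ with each $H_j\subset W_0$ and $\sum_j H_j=W_0$, so any linear isomorphism $W_0\cong\mc^i$ induces a biholomorphism $Gr(k,W_0)\cong Gr(k,i)$ and thereby identifies $\gamma^{-1}(W_0)$ with $\mathcal{F}_h^i(k,i)$.

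Next I would build a holomorphic local section of the map $GL(n,\mc)\to Gr(i,n)$. Choose a complement $L$ of $W_0$ in $\mc^n$ and let $U\subset Gr(i,n)$ be the affine chart consisting of subspaces $W$ transverse to $L$. Each such $W$ is the graph of a unique linear map $f_W\colon W_0\to L$ depending holomorphically on $W$. Define $s\colon U\to GL(n,\mc)$ by declaring $s(W)$ to be the identity on $L$ and $s(W)(v)=v+f_W(v)$ on $W_0$. Then $s(W_0)=\mathrm{id}$ and $s(W)$ restricts to a linear isomorphism $W_0\to W$ depending holomorphically on $W\in U$.

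Finally, I would define the local trivialization
$$\Phi\colon U\times\gamma^{-1}(W_0)\longrightarrow\gamma^{-1}(U),\qquad \Phi\bigl(W,(H_1,\ldots,H_h)\bigr)=\bigl(s(W)H_1,\ldots,s(W)H_h\bigr),$$
where $s(W)H_j$ denotes the image of $H_j$ under the linear automorphism $s(W)$. Since $s(W)$ is a linear isomorphism sending $W_0$ onto $W$, one has $\sum_j s(W)H_j=s(W)(\sum_j H_j)=W$, so $\Phi$ lands in $\gamma^{-1}(U)$ and manifestly commutes with the projections to $U$. Its inverse is $(H'_1,\ldots,H'_h)\mapsto\bigl(\gamma(H'),\,s(\gamma(H'))^{-1}H'_1,\ldots,s(\gamma(H'))^{-1}H'_h\bigr)$, and both maps are holomorphic. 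Combining this with the identification in Step 1 proves local triviality with fiber $\mathcal{F}_h^i(k,i)$. The main technical content is simply the classical graph parametrization of the Grassmannian chart, so I expect no genuine obstacle; the only thing to verify carefully is that the $GL(n,\mc)$-action transports the defining conditions of $\mathcal{F}_h^i(k,i)$ to those of neighboring fibers, which is immediate from the linearity of $s(W)$.
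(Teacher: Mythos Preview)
Your argument is correct and essentially identical to the paper's: both use the standard affine chart $U=\{W:W\oplus L=\mc^n\}$ of $Gr(i,n)$ and the family of linear isomorphisms between $W_0$ and nearby $W$ coming from projection along $L$ (your $s(W)$ is precisely the inverse of the paper's $\varphi_V$ extended by the identity on $L$). The only cosmetic difference is that you write the trivialization as $U\times\gamma^{-1}(W_0)\to\gamma^{-1}(U)$ while the paper writes its inverse.
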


\begin{proof}Let $V_0$ be an element in the Grassmannian manifold $Gr(i,n)$. Fix 
$L_0\in Gr(n-i,n)$ such that $L_0\oplus V_0=\mc^n$ and let $\varphi:\mc^n\to V_0$ be the linear projection on $V_0$ given by the direct sum.
If $\mathcal{F}_h^i(k,V_0)$ is the ordered configuration space of $h$ distinct $k$-dimensional spaces in $V_0$ whose sum is an $i$-dimensional subspace, then $\mathcal{F}_h^i(k,V_0)$ coincides with $\mathcal{F}_h^i(k,i)$ when a basis in $V_0$ is fixed.

Let $\mathcal{U}_{L_0}$ be the open neighborhood of $V_0$ in $Gr(i,n)$ defined as
$$
\mathcal{U}_{L_0}=\{V\in Gr(i,n)|\ L_0\oplus V=\mc^n\}.
$$
The restriction of the projection $\varphi$ to an element $V$ in $\mathcal{U}_{L_0}$ is a linear isomorphism $\varphi_V:V\rightarrow V_0$ and a local trivialization for $\gamma$ is given by the homeomorphism
\begin{equation*}
\begin{split}
 f:\gamma^{-1}(\mathcal{U}_{L_0}) &\rightarrow \mathcal{U}_{L_0}\times \mathcal{F}_h^i(k,V_0)\\
 y=(H_1,\ldots,H_h)  &\mapsto \big(\gamma(y),(\varphi_{\gamma(y)}(H_1),\ldots,\varphi_{\gamma(y)}(H_h))\big)
\end{split}
\end{equation*}
which makes the following diagram commute.
	
\begin{center}
\begin{picture}(360,120)
\thicklines
\put(77,90){\vector(2,-1){45}}
\put(107,100){\vector(1,0){70}}
\put(197,90){\vector(-2,-1){45}}
\put(55,98){${\gamma}^{-1}(\mathcal{U}_{L_0})$}
\put(184,98){$\mathcal{U}_{L_0}\times \mathcal{F}_h^i(k,i)$}
\put(130,60){$ \mathcal{U}_{L_0}$}
\put(83,70){$\gamma$}
\put(180,70){$pr_1$}
\put(134,105){$f$}
\end{picture}
\end{center}
\vspace{-1.7cm}
This completes the proof. \end{proof}

\bigskip

\begin{lemma}\label{lem:pr}
\label{pr:2} The projection map on the first $h-1$ entries
\begin{equation}\label{map:proj}
\begin{split}
pr:\mathcal{F}_h^{kh}(k,kh) &\rightarrow \mathcal{F}_{h-1}^{k(h-1)}(k,kh)\\
(H_1,\ldots,H_h) &\mapsto (H_1,\ldots, H_{h-1})
\end{split}
\end{equation}
is a locally trivial fibration with fiber $\mc^{k(kh-k)}$.
\end{lemma}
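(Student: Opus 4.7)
The plan is to mimic the proof of Lemma 2.1 (the previous lemma): identify the fiber explicitly, then construct local trivializations by means of an auxiliary direct-sum decomposition.

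First I would pin down the fiber. Given $V=(H_1,\ldots,H_{h-1})\in\mathcal{F}_{h-1}^{k(h-1)}(k,kh)$, set $W(V):=H_1+\cdots+H_{h-1}$, which has dimension $k(h-1)$. A $k$-plane $H_h\in Gr(k,kh)$ lies in $pr^{-1}(V)$ exactly when the full configuration $(H_1,\ldots,H_h)$ spans $\mc^{kh}$ and $H_h\neq H_i$ for every $i<h$. A dimension count shows that the spanning condition is equivalent to $H_h\cap W(V)=0$, so $H_h$ must be a complement of $W(V)$ in $\mc^{kh}$; in particular $H_h\neq H_i$ is automatic, since $H_i\subset W(V)$ while $H_h\cap W(V)=0$ and $H_i\neq 0$. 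Hence the fiber is the standard affine chart of $Gr(k,kh)$ consisting of complements of $W(V)$, parametrized, once a reference complement $H_h^\ast$ is fixed, by $\mathrm{Hom}(H_h^\ast,W(V))\cong\mc^{k\cdot k(h-1)}=\mc^{k(kh-k)}$.

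Next I would build local trivializations around a chosen $V_0\in\mathcal{F}_{h-1}^{k(h-1)}(k,kh)$. Set $W_0=W(V_0)$, and choose $H_h^0$ with $W_0\oplus H_h^0=\mc^{kh}$. Let
\[
\mathcal{U}=\{V\in\mathcal{F}_{h-1}^{k(h-1)}(k,kh)\mid W(V)\oplus H_h^0=\mc^{kh}\},
\]
an open neighborhood of $V_0$. For $V\in\mathcal{U}$ the linear projection $\mc^{kh}\to W_0$ along $H_h^0$ restricts to an isomorphism $\alpha_V:W(V)\to W_0$, and I assemble these into the automorphism $\theta_V\in GL(\mc^{kh})$ determined by $\theta_V|_{W(V)}=\alpha_V$ and $\theta_V|_{H_h^0}=\mathrm{id}$. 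Then $\theta_{V_0}=\mathrm{id}$ and $\theta_V$ carries complements of $W(V)$ bijectively onto complements of $W_0$. The local trivialization is
\[
f:pr^{-1}(\mathcal{U})\longrightarrow\mathcal{U}\times\mathrm{Hom}(H_h^0,W_0),\qquad (V,H_h)\longmapsto(V,\varphi_V(H_h)),
\]
where $\varphi_V(H_h)$ is the unique linear map $H_h^0\to W_0$ whose graph in $\mc^{kh}=H_h^0\oplus W_0$ equals $\theta_V(H_h)$; by construction $f$ commutes with the two projections to $\mathcal{U}$.

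The main point to verify is that $\theta_V$, and therefore $f$, varies continuously (indeed holomorphically) with $V$, everything else being either formal or handled by the first paragraph. This reduces to continuity on $\mathcal{U}$ of the decomposition $\mc^{kh}=W(V)\oplus H_h^0$, which is standard: writing $W(V)$ locally as the column span of a holomorphic family of full-rank matrices, the transversality condition $W(V)\oplus H_h^0=\mc^{kh}$ ensures that the projector onto $W_0$ along $H_h^0$, and hence $\alpha_V$, depends holomorphically on $V$. Combined with the identification $\mathrm{Hom}(H_h^0,W_0)\cong\mc^{k(kh-k)}$, this yields the local triviality with the claimed fiber.
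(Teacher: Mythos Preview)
Your proof is correct and follows essentially the same approach as the paper: you fix a complement $H_h^0$ of $W_0=W(V_0)$ (the paper calls it $L_0$), take the open neighborhood of configurations whose span is still transverse to $H_h^0$, and trivialize via the family of automorphisms that project $W(V)$ onto $W_0$ along $H_h^0$ while fixing $H_h^0$. The only difference is cosmetic: you go one step further and identify the fiber $U_{W_0}=\{H\in Gr(k,kh)\mid H\oplus W_0=\mc^{kh}\}$ explicitly with $\mathrm{Hom}(H_h^0,W_0)\cong\mc^{k(kh-k)}$ via graphs, whereas the paper simply cites that this is a coordinate chart of the Grassmannian.
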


\begin{proof} Let $V_0$ be an element in $\mathcal{F}_{h-1}^{k(h-1)}(k,kh)$. Fix $L_0\in Gr(k,kh)$ such that $L_0\oplus \gamma(V_0)=\mc^n$ and let $\varphi:\mc^n\to \gamma( V_0)$ be the linear projection on $\gamma(V_0)$ given by the direct sum.\\
The fiber of the projection map $pr$ over $ V_0$ is the open set $$U_{\gamma( V_0)}=\{H\in Gr(k,kh)|H\oplus \gamma( V_0)=\mc^n\}$$ 
which is homeomorphic to $\mc^{k(kh-k)}$ (it is a coordinate chart for the Grassmannian manifold $Gr(k,hk)$).

Let $\mathcal{U}_{L_0}$ be the open neighborhood of $ V_0$ in $\mathcal{F}_{h-1}^{k(h-1)}(k,kh)$ defined as
$$
\mathcal{U}_{L_0}=\{V\in \mathcal{F}_{h-1}^{k(h-1)}(k,kh)|\ L_0\oplus \gamma(V)=\mc^n\}.
$$
If $V$ is a point in $\mathcal{U}_{L_0}$, the restriction of the map $\varphi$ to $\gamma(V)$ is a linear isomorphism $\tilde\varphi_V:\gamma(V)\rightarrow \gamma( V_0)$ that can be extended to an isomorphism $\varphi_V$ of $\mc^n$ by requiring it to be the identity on $L_0$.\\
A local trivialization for the projection $pr$ is given by the homeomorphism
\begin{equation*}
\begin{split}
f:pr^{-1}(\mathcal{U}_{L_0})&\rightarrow \mathcal{U}_{L_0}\times U_{\gamma( V_0)}\\
y=(H_1,\ldots,H_h)&\mapsto\big(pr(y),\varphi_{\gamma(pr(y))}(H_h)\big)
\end{split}
\end{equation*}
which makes the following diagram commute. 
\begin{center}
\begin{picture}(360,120)
\thicklines
\put(77,90){\vector(2,-1){45}}
\put(107,100){\vector(1,0){70}}
\put(197,90){\vector(-2,-1){45}}
\put(55,98){${pr}^{-1}(\mathcal{U}_{L_0})$}
\put(184,98){$\mathcal{U}_{L_0}\times U_{\gamma(V_0)}$}
\put(130,60){$ \mathcal{U}_{L_0}$}
\put(83,70){$pr$}
\put(180,70){$pr_1$}
\put(134,105){$f$} 
\end{picture}
\end{center}
\vspace{-1.7cm}
This completes the proof. \end{proof}

\bigskip
 
Let us remark that if $V=(H_1,\ldots,H_h)$ is a point in the space $\mathcal{F}_h^{kh}(k,n)$, i.e. $i=kh$, then the $h$ subspaces $H_1,\ldots, H_h$ are in direct sum and the map 
\begin{equation*}
\begin{split}
pr:\mathcal{F}_h^{kh}(k,n)&\rightarrow \mathcal{F}_{h-1}^{k(h-1)}(k,n)\\
(H_1,\ldots,H_h)&\mapsto (H_1,\ldots, H_{h-1})
\end{split}
\end{equation*}
is well defined. With a similar argument to the one used in Lemma \ref{lem:pr} it can be proved that this map is a locally trivial fibration even if $n\not=hk$. In this case the fiber (which can be described as the union of coordinate charts $U_W$ of the Grassmannian manifold) is still simply connected but not homotopically trivial.

We have, from the homotopy long exact sequence of the fibration $pr$, that
\begin{equation}\label{om:pr}
\pi_j(\mathcal{F}_h^{kh}(k,kh))=\pi_j(\mathcal{F}_{h-1}^{k(h-1)}(k,kh))
\end{equation}
for all $j$ and, by equation (\ref{casodiretto}), that 
$$
\pi_1(\mathcal{F}_h^{kh}(k,kh))=\pi_1(\mathcal{F}_{h-1}^{k(h-1)}(k,kh))=0.
$$
It follows that the open stratum $\mathcal{F}_h^{kh}(k,kh)$ is simply connected, hence all open strata are simply connected.\\
Moreover, from the homotopy long exact sequence of the fibration $\gamma$, we have that
$$
\pi_1(\mathcal{F}_h^i(k,i))\to\pi_1(\mathcal{F}_h^i(k,n))\to\pi_1(Gr(i,n))=0.
$$
As $\mathcal{F}_h^i(k,i)$ is an open stratum, it is simply connected and hence $\pi_1(\mathcal{F}_h^i(k,n))=0$.

That is all our configuration spaces are simply connected and Theorem \ref{teo:prin} is proved.

\section{The second homotopy group}\label{s:due}
In this section we compute the second homotopy group $\pi_2(\mathcal{F}_h^i(k,n))$ when $i=hk$, i.e. subspaces in direct sum, and when $h=2$, i.e. the case of two subspaces.
In order to compute those homotopy groups, we need to know that the third homotopy group for Grassmannian manifolds is trivial if $k>1$. Even if it should be a classical result we didn't find references and we decided to give a proof here. 

Let $V_{k,n}$ be the space parametrizing the (ordered) $k$-uples of orthonormal vectors in $\mc^n$, $1\leq k\leq n$. It is an easy remark that $V_{1,n}=S^{2n-1}$ and $V_{n,n}=U(n)$.
It's well known that the function that maps an element of $V_{k,n}$ to the subspace generated by its entries is a locally trivial fibration:
\begin{equation}\label{fibr:inc}
V_{k,k}\hookrightarrow V_{k,n}\to Gr(k,n)\ \ (k<n) ,
\end{equation}
while the projection on the last entry is the locally trivial fibration: 
\begin{equation}\label{fibr:del}
V_{k-1,n-1}\hookrightarrow V_{k,n}\to S^{2n-1}\ \ (k>1) .
\end{equation}

The fibration in (\ref{fibr:del}) induces the long exact sequence in homotopy
 \begin{equation}\label{hom}
\begin{split}
\to &\pi_4(S^{2n-1})\to\pi_3(V_{k-1,n-1})\to \pi_3(V_{k,n})\to \pi_3(S^{2n-1})\to\pi_2(V_{k-1,n-1})\\
&\to \pi_2(V_{k,n})\to \pi_2(S^{2n-1})\to\pi_1(V_{k-1,n-1})\to \pi_1(V_{k,n}) \to \pi_1(S^{2n-1})\to 0
\end{split}
\end{equation}

If $n=k=2$, since $V_{1,1}=S^1$, the sequence in (\ref{hom}) becomes:
\begin{equation*}
\begin{split}
0\to\pi_3(&V_{2,2})\to\mz\to 0\to \\
&\to \pi_2(V_{2,2})\to 0\to\mz\to \pi_1(V_{2,2})\to 0  \quad ,
\end{split}
\end{equation*}
that is $\pi_1(V_{2,2})=\mz$, $\pi_2(V_{2,2})=0$, $\pi_3(V_{2,2})=\mz$.\\ 
If $n>2$ by (\ref{hom}) we get isomorphisms $\pi_i(V_{k,n}) \simeq \pi_i(V_{k-1,n-1})$, $1~\leq~i~\leq~3$. This implies that,
if $k=n$ then $\pi_i(V_{n,n}) \simeq \pi_i(V_{n-1,n-1}) \simeq \pi_i(V_{2,2})$, thus $\pi_1(V_{n,n})=\mz$, $\pi_2(V_{n,n})=0$, $\pi_3(V_{n,n})=\mz$. Otherwise $\pi_i(V_{k,n}) \simeq \pi_i(V_{k-1,n-1})\simeq \pi_1(V_{1,n-k+1})$ and hence $\pi_1(V_{k,n})=0$, $\pi_2(V_{k,n})=0$, $\pi_3(V_{k,n})=0$ except if $k=n-1$ in which case  $\pi_3(V_{n-1,n})=\mz$.\\

By the above considerations, the exact sequence of homotopy groups associated to the fibration (\ref{fibr:inc}) for $k<n-1$ becomes 
\begin{equation*}
\begin{split}
\mz\to 0\to\pi_3(Gr(k,n))\to 0\to &0\to \pi_2(Gr(k,n))\to \\
& \to \mz\to 0\to \pi_1(Gr(k,n))\to 0 \quad ,
\end{split}
\end{equation*}
that is $\pi_1(Gr(k,n))=0$, $\pi_2(Gr(k,n))=\mz$ and $\pi_3(Gr(k,n))=0$ if $k<n-1$.\\
If $k=n-1$ then $Gr(n-1,n)=\PP^{n-1}$ and $\pi_3(Gr(n-1,n))=0$ except if $n=2$ in which case $Gr(1,2)=S^2$ and $\pi_3(Gr(1,2))=\mz$.
That is the third homotopy group of the Grasmannian manifold $Gr(k,n)$ is trivial if $k>1$.\\

Since the third homotopy group of the Grasmannian manifold $Gr(k,n)$ is trivial if $k>1$ then for $i<n$ the homotopy long exact sequence of the fibration $\gamma$ defined in equation (\ref{map:gamma}) gives :
$$
0=\pi_3(Gr(i,n))\to\pi_2(\mathcal{F}_h^i(k,i))\to\pi_2(\mathcal{F}_h^i(k,n))\to\mz=\pi_2(Gr(i,n))\to0.
$$
As the second homotopy groups are abelian and the above short exact sequence splits, we have
$$
\pi_2(\mathcal{F}_h^i(k,n))=\pi_2(\mathcal{F}_h^i(k,i))\times\mz.
$$ 
\paragraph{The case $i=hk$.} If $i=hk$, by equation (\ref{om:pr}), $\pi_2(\mathcal{F}_h^{hk}(k,hk))=\pi_2(\mathcal{F}_{h-1}^{k(h-1)}(k,hk))$
and the following equalities hold:
\begin{eqnarray*}
\pi_2(\mathcal{F}_h^{hk}(k,hk))&=&\pi_2(\mathcal{F}_{h-1}^{k(h-1)}(k,k(h-1)))\times\mz=\\
&=&\pi_2(\mathcal{F}_{h-2}^{k(h-2)}(k,k(h-1)))\times\mz=\\
&=&\pi_2(\mathcal{F}_{h-2}^{k(h-2)}(k,k(h-2)))\times\mz^2=\\
&=&\pi_2(\mathcal{F}_{2}^{2k}(k,2k))\times\mz^{h-2}=\\
&=&\pi_2(\mathcal{F}_{1}^{k}(k,2k))\times\mz^{h-2}=\\
&=&\pi_2(Gr(k,2k))\times\mz^{h-2}=\\
&=&\mz^{h-1}\\
\end{eqnarray*}
while, if $hk<n$, $\pi_2(\mathcal{F}_h^{hk}(k,n))=\mz^h$.

\paragraph{The case $h=2$.} If $h=2$ a point $(H_1,H_2)$ is in the space $\mathcal{F}_2^i(k,n)$ if and only if the dimension of intersection dim$(H_1\cap H_2)=2k-i$.
If $i=2k$ (which includes the cases $k=1$ and $n=2$) $H_1$ and $H_2$ are in direct sum otherwise the following Lemma holds.
\begin{lemma}
\label{pr:2} If $i<2k$, the map
\begin{equation*}
\begin{split}
\eta:\mathcal{F}_2^i(k,n)&\rightarrow Gr(2k-i,n)\\
(H_1,H_2) &\mapsto H_1\cap H_2
\end{split}
\end{equation*}
is a locally trivial fibration with fiber $\mathcal{F}_2^{2i-2k}(i-k,n-2k+i)$.
\end{lemma}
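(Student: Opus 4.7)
The plan is to adapt the local trivialization arguments used in Lemmas \ref{pr:2}, now with the role of a direct-sum complement played by a fixed complement of the intersection. Concretely, I fix $W_0\in Gr(2k-i,n)$, choose $L_0\in Gr(n-2k+i,n)$ with $W_0\oplus L_0=\mc^n$, and form the open neighborhood
$$\mathcal{U}_{L_0}=\{W\in Gr(2k-i,n)\,|\,W\oplus L_0=\mc^n\}$$
of $W_0$. For each $W\in\mathcal{U}_{L_0}$, let $\pi_W:\mc^n\to L_0$ denote the linear projection onto $L_0$ along $W$.

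The core observation I would exploit is the following: if $(H_1,H_2)\in\mathcal{F}_2^i(k,n)$ has intersection $W\in\mathcal{U}_{L_0}$, then $W\subset H_j$ combined with $W\oplus L_0=\mc^n$ forces the decomposition $H_j=W\oplus(H_j\cap L_0)$, so $\pi_W(H_j)=H_j\cap L_0$ is an $(i-k)$-dimensional subspace of $L_0$. Moreover
$$\pi_W(H_1)\cap\pi_W(H_2)=(H_1\cap L_0)\cap(H_2\cap L_0)=H_1\cap H_2\cap L_0=W\cap L_0=0,$$
so the pair $(\pi_W(H_1),\pi_W(H_2))$ consists of $(i-k)$-planes of $L_0$ in direct sum, whose span has dimension $2(i-k)=2i-2k$. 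After fixing a basis of $L_0$, the space of such pairs is exactly $\mathcal{F}_2^{2i-2k}(i-k,n-2k+i)$.

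With this identification I would define the trivialization
\begin{equation*}
\begin{split}
f:\eta^{-1}(\mathcal{U}_{L_0})&\rightarrow\mathcal{U}_{L_0}\times\mathcal{F}_2^{2i-2k}(i-k,n-2k+i)\\
(H_1,H_2)&\mapsto\big(H_1\cap H_2,(\pi_{H_1\cap H_2}(H_1),\pi_{H_1\cap H_2}(H_2))\big)
\end{split}
\end{equation*}
with inverse $(W,(K_1,K_2))\mapsto(W\oplus K_1,W\oplus K_2)$, which is a manifest two-sided inverse thanks to the decomposition $H_j=W\oplus(H_j\cap L_0)$. Commutativity with $\eta$ and the first-coordinate projection is built into the formula, so once $f$ is known to be a homeomorphism we obtain local triviality.

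The only genuinely technical point, and the one I would treat with care, is the continuity (in fact holomorphy) of the assignment $(W,x)\mapsto\pi_W(x)$ on $\mathcal{U}_{L_0}$. This is standard: in the affine chart identifying $\mathcal{U}_{L_0}$ with the vector space of linear maps $\phi:W_0\to L_0$ (via graphs), both $\pi_W$ and the dual projection onto $W$ are rational in the entries of $\phi$, which makes $f$ and its inverse continuous. All remaining checks reduce to the dimension/transversality count recorded above, completing the verification that $\eta$ is a locally trivial fibration with the stated fiber.
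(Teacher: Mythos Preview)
Your proof is correct and follows essentially the same strategy as the paper: fix a complement $L_0$ of a base intersection $W_0$, use the open chart $\mathcal{U}_{L_0}$, and trivialize by recording the pair of complementary $(i-k)$-planes in $L_0$. The paper phrases this via an ambient isomorphism $\varphi_V$ of $\mc^n$ (identity on $L_0$, sending $V$ to $V_0$) rather than the projection $\pi_W$ onto $L_0$, but since $\varphi_V(H_j)=V_0\oplus(H_j\cap L_0)$ this is the same data; your version is in fact more explicit about the key decomposition $H_j=W\oplus(H_j\cap L_0)$ and the inverse map.
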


\begin{proof} Let $V_0$ be a point in the Grassmannian manifold $Gr(2k-i,n)$. Fix $L_0\in Gr(n-2k+i,n)$ such that $L_0\oplus V_0=\mc^n$ and let $\varphi:\mc^n\to V_0$ be the linear projection given by the direct sum.\\
The fiber $\eta^{-1}(V_0)$ is the set of all pairs $(H_1,H_2)$ of $k$-dimensional subspaces of $\mc^n$ such that $H_1\cap H_2=V_0$.  
That is, a pair $(H_1,H_2)$ is in $\eta^{-1}(V_0)$ if and only if it corresponds to a pair of $(i-k)$-dimensional subspaces of $\mc^n/V_0$ are in direct sum, i.e. a point in $\mathcal{F}_2^{2(i-k)}(i-k,n-2k+i)$.

Let $\mathcal{U}_{L_0}$ be the open neighborhood of $V_0$ in $Gr(2k-i,n)$, defined as
$$
\mathcal{U}_{L_0}=\{V\in Gr(2k-i,n)|\ L_0\oplus V=\mc^n\}.
$$
If $V$ is a point in $\mathcal{U}_{L_0}$, the restriction of $\varphi$ to $\gamma(V)$ is a linear isomorphism $\tilde\varphi_V:V\rightarrow  V_0$ that can be extended to an isomorphism $\varphi_V$ of $\mc^n$ by requiring it to be the identity on $L_0$.

A local trivialization for $\eta$ is the homeomorphism
\begin{equation*}
\begin{split}
f:\eta^{-1}(\mathcal{U}_{L_0})&\rightarrow \mathcal{U}_{L_0}\times \eta^{-1}(V_0)\\
(H_1,H_2)&\mapsto\big(\eta(y),(\varphi_{\eta(y)}(H_1),\varphi_{\eta(y)}(H_2))\big)
\end{split}
\end{equation*}
 This completes the proof. \end{proof}

\bigskip

By the homotopy long exact sequence of the map $\eta$, we get:
$$
0\to\pi_2(\mathcal{F}_2^{2i-2k}(i-k,n-2k+i))\to\pi_2(\mathcal{F}_2^{i}(k,n))\to\mz\to0
$$
and hence $\pi_2(\mathcal{F}_2^{i}(k,n))=\mz\times\pi_2(\mathcal{F}_2^{2(i-k)}(i-k,n-2k+i))$. By the previous case, $\pi_2(\mathcal{F}_2^{2(i-k)}(i-k,n-2k+i))$ is equal to $\mz$ if $2(i-k)=n-2k+i$, that is if $i=n$, and is equal to $\mz^2$ otherwise. So,
we get $\pi_2(\mathcal{F}_2^{n}(k,n))=\mz^2$ and $\pi_2(\mathcal{F}_2^{i}(k,n))=\mz^3$ if $i<n$.

\end{document}